\definecolor{hot}{RGB}{65,105,225}
\theoremstyle{plain}
\newtheorem{theorem}{Theorem}[section]
\newtheorem{prop}[theorem]{Proposition}
\newtheorem{lm}[theorem]{Lemma}
\newtheorem{cor}[theorem]{Corollary}
\newtheorem{thrm}[theorem]{Theorem}
\theoremstyle{definition}
\newtheorem{defn}[theorem]{Definition}
\newtheorem{rmk}[theorem]{Remark}
\newtheorem{remark}[theorem]{Remark}
\newtheorem{ex}[theorem]{Example}
\newtheorem*{ex*}{Example}
\def\be{\begin{equation}}
\def\ee{\end{equation}}
\def\bt{\begin{thrm}}
\def\et{\end{thrm}}
\def\bc{\begin{cor}}
\def\ec{\end{cor}}
\def\br{\begin{rmk}}
\def\er{\end{rmk}}
\def\bp{\begin{prop}}
\def\ep{\end{prop}}
\def\bl{\begin{lm}}
\def\el{\end{lm}}
\def\bex{\begin{ex}}
\def\eex{\end{ex}}
\def\bd{\begin{defn}}
\def\ed{\end{defn}}
\newcommand{\C}{\mathbb{C}}
\newcommand{\R}{\mathbb{R}}
\newcommand{\Z}{\mathbb{Z}}
\newcommand{\Q}{\mathbb{Q}}
\newcommand{\K}{\mathbb{K}}
\newcommand{\F}{\mathbb{F}}
\newcommand{\Tors}{\mathrm{Tors}}
\newcommand{\sA}{\mathcal{A}}
\newcommand{\sL}{\mathcal{L}}
\newcommand{\coker}{\mathrm{coker}}
\begin{document}

\title[]{Spectral sequences, Massey products \\ and  homology of covering spaces}



\author{Yongqiang Liu}
\address{Y. Liu: The Institute of Geometry and Physics, University of Science and Technology of China, 96 Jinzhai Road, Hefei, Anhui 230026 China}
\email{liuyq@ustc.edu.cn}

\author{Laurentiu Maxim}
\address{L. Maxim: Department of Mathematics,         University of Wisconsin-Madison,  480 Lincoln Drive, Madison WI 53706-1388, USA,\newline
{\text and} \newline Institute of Mathematics of the Romanian Academy, P.O. Box 1-764, 70700 Bucharest, ROMANIA}
\email {maxim@math.wisc.edu}

\author{Botong Wang}
\address{B. Wang: Department of Mathematics,         University of Wisconsin-Madison,  480 Lincoln Drive, Madison WI 53706-1388, USA.}
\email {wang@math.wisc.edu}


\date{\today}

\keywords{Hyperplane arrangement, cyclic covering, Massey products, spectral sequence, Betti numbers, Alexander modules}

\subjclass[2020]{55N25, 55T99, 57M05}

\begin{abstract}
We revisit the equivariant spectral sequence considered by Papadima-Suciu, and show that all its differentials are computed by higher order Massey products. As a first application, we extend to arbitrary field coefficients results of Pajitnov relating the size of Jordan blocks for the eigenvalue $1$ part of the Alexander modules to the length of nonvanishing Massey products in cohomology. We also give computable upper bounds for the mod $p$ Betti numbers of prime power cyclic covers, and resp. for the ranks of the cohomology groups with coefficients in a prime order rank one local system. Under suitable conditions, these bounds are improvements of the ones obtained by Papadima-Suciu. We also specialize these results to the case of hyperplane arrangement complements, showing e.g., that vanishing of higher-order Massey products implies that the mod $p$ Betti numbers of prime $p$ tower cyclic covers are combinatorially determined. 
\end{abstract}

\maketitle


\section{Introduction}

\subsection{Infinite cyclic cover}

Let $X$ be a connected finite CW complex with a group epimorphism $\nu \colon \pi_1(X)\twoheadrightarrow \Z$, and consider the corresponding infinite cyclic cover $X^{\nu}$.  Fix a ground field $\K$, and set $R=\K[\Z]\simeq \K[t^{\pm 1}]$.  Then the {\it $i$-th Alexander module} $H_i(X^\nu, \K)$ of $(X,\nu)$ is a finitely generated $R$-module for any integer $i$. 

If we assume that $\nu$ is induced by a fibration $f\colon X\to S^1$ with fiber $F$, there are many results in the literature that relate  the size of the Jordan blocks for the monodromy action on $H_*(F,\C)\cong H_*(X^\nu,\C)$ to the Massey products of $X$.
For instance, Fern\'andez--Gray--Morgan  \cite{FGM} used the relation between non-vanishing Massey products of length $2$ and the Jordan blocks
of size greater than $1$ to prove that certain mapping tori do not admit the structure of a K\"ahler manifold.
Papadima--Suciu  \cite{PS10B} showed that if $\pi_1(X)$ is $1$-formal,  the eigenvalue $1$ part of $H_1(F,\C)$ is semi-simple, see also \cite{PS10}. Moreover, Bazzoni--Fern\'andez--Mu\~noz \cite{BFM} showed that the existence of Jordan blocks of size $2$ implies the existence of non-zero triple Massey products.

Without the circle fibration assumption, the size of Jordan blocks for the eigenvalue $1$ part of $H_*(X^\nu,\C)$ was studied by Papadima--Suciu in \cite[Section 9]{PS10}. Also, Budur and the first and third authors of this paper studied this question for compact K\"ahler manifolds and smooth complex quasi-projective varieties, see \cite{BLW}. In particular, the torsion part of $H_*(X^\nu,\C)$ is semi-simple for $X$ a compact K\"ahler manifold, since such a manifold is formal over $\C$, cf. \cite{DGMS}. Finally, Pajitnov identified the length of certain non-zero higher order Massey products with the size of Jordan
blocks for the eigenvalue $1$ part of $H_*(X^\nu,\C)$, see \cite[Theorem 5.1]{Paj17}, and also \cite{Paj19} for the twisted version.

One of the motivations for this paper is to extend Pajitnov's results  \cite{Paj17} to positive characteristic field coefficients. In this context, formality is often not available, e.g., compact K\"ahler manifolds \cite{Eke} and hyperplane arrangement complements \cite{Mat} are in general not formal over $\F_p$. 

There exist several spectral sequences in the literature, designed to study the (torsion submodule of the) Alexander module $H_*(X^\nu,\K)$. Here we use 
a Massey type spectral sequence, which as we show in Proposition \ref{prop key} is dual to the $J$-adic equivariant spectral sequence studied by Papadima--Suciu \cite{PS10}. Furthermore, standard facts about spectral sequences can be used to show that the differentials of the Massey type spectral sequence are computed in terms of a special type of higher order Massey products (see Definition \ref{higherM}  and see also \cite{KP,Paj17,Paj19} for similar statements).
The latter identification leads us to the following generalization of the above mentioned result of Pajitnov  \cite[Theorem 5.1]{Paj17} for arbitrary field coefficients:   
\begin{theorem} \label{thm main}
For $i\geq 0$, the maximal size of Jordan blocks for the eigenvalue $1$ part of $H_i(X^\nu,\K)$ is one less than the length of the highest nonvanishing  Massey product {on degree $i$} associated to a $1$-cocycle representative $\eta$ for $\nu \in H^1(X^\nu,\K)$.
    In particular, if all higher order Massey products of $X$ are trivial, the eigenvalue $1$ part of $H_*(X^\nu,\K)$ is semi-simple.
\end{theorem}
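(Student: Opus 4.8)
The plan is to deduce the theorem from the spectral sequence machinery assembled above, reading the Jordan block sizes off the $J$-adic filtration and translating the relevant differentials into Massey products. Throughout, write $R=\K[t^{\pm 1}]$, let $J=(t-1)$ be the augmentation ideal, and let $\widehat{R}=\K[[t-1]]$ be the $J$-adic completion. The first step is to localize: the eigenvalue $1$ part of the Alexander module $H_i(X^\nu,\K)$ is exactly its $(t-1)$-primary torsion submodule, which after completion becomes a finite direct sum of cyclic modules $\bigoplus_j \widehat{R}/(t-1)^{k_j}$; the Jordan blocks for the monodromy $t$ (equivalently, for the nilpotent operator $t-1$) on the eigenvalue $1$ part have sizes precisely the exponents $k_j$. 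Completion at $J$ kills the summands supported at eigenvalues $\lambda\neq 1$, since $t-\lambda$ is a unit in $\widehat{R}$, and leaves the free part as a polynomial ray, so passing to $\widehat{R}$ loses no information about the eigenvalue $1$ Jordan structure.

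Second, I would invoke Proposition \ref{prop key} to replace the Massey type spectral sequence by the $J$-adic equivariant spectral sequence of Papadima--Suciu, which converges to the associated graded $\mathrm{gr}_J H_*(X^\nu,\K)$. For a cyclic summand $\widehat{R}/(t-1)^{k}$ the associated graded dimensions are $\dim_\K \mathrm{gr}_J^p=1$ for $0\le p\le k-1$ and $0$ otherwise, so that a block of size $k$ occupies filtration levels $0,\dots,k-1$ and is absent from level $k$ and above. Consequently the maximal block size in degree $i$ equals one plus the top filtration level carrying a nonzero torsion contribution to $\mathrm{gr}_J H_i(X^\nu,\K)$.

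The heart of the argument is the identification of this top level with the highest nonvanishing differential. A single cyclic summand $\widehat{R}/(t-1)^{k}$ arises, at the level of the filtered chain complex, from a relation that multiplies a generator by $(t-1)^{k}$, hence raises the $J$-filtration by exactly $k$; in the spectral sequence this relation is carried by the differential $d_k$ and by no earlier one. The classes in filtration levels $0,\dots,k-1$ then survive to $E_\infty$, while the class one level higher is annihilated by this $d_k$; thus $d_k$ is nonzero and is the highest differential relevant to that summand. Taking the maximum over all summands in the prescribed degree shows that the maximal block size equals the largest $r$ for which the corresponding differential $d_r$ is nonzero. By the identification of the differentials with Massey products established in the preceding discussion (cf. Definition \ref{higherM}), $d_r$ is computed by the Massey product of length $r+1$ associated to $\eta$; hence the maximal block size is exactly one less than the length of the highest nonvanishing such Massey product on degree $i$.

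I expect the main obstacle to be the clean proof of this detection statement: namely, that the $(t-1)$-primary cyclic decomposition of $H_i(X^\nu,\K)$ is faithfully mirrored, summand by summand, by the page on which the corresponding spectral sequence class dies, together with the correct degree and length bookkeeping. I would isolate this as a homological-algebra lemma about the $J$-adic spectral sequence of a bounded complex of free $R$-modules, verifying it first on the two-term free resolution $0\to\widehat{R}\xrightarrow{(t-1)^k}\widehat{R}\to\widehat{R}/(t-1)^k\to 0$ and then reducing the general case to this model by naturality and the multiplicativity of the $J$-adic filtration; the free part and the eigenvalue $\neq 1$ summands must be checked not to interfere, which is exactly where completion at $J$ does the work. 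Finally, the last assertion follows immediately: if all higher order Massey products of $X$ vanish, then $d_r=0$ for all $r\ge 2$, so no cyclic summand of size $\ge 2$ can occur, and the eigenvalue $1$ part of $H_*(X^\nu,\K)$ is semisimple.
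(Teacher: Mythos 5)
Your proposal is correct and follows essentially the same route as the paper: isolate the eigenvalue $1$ part by completing at $(t-1)$, identify the differentials $d_k$ of the filtration spectral sequence with the $(k+1)$-tuple Massey products associated to $\eta$, and read off Jordan block sizes by reducing to elementary model complexes over the PID $\widehat{R}=\K[[s]]$. The ``detection lemma'' you single out as the main obstacle is exactly what the paper disposes of in one step, by observing that any bounded complex of finitely generated $\widehat{R}$-modules is quasi-isomorphic to a finite direct sum of complexes $0\to \widehat{R}\to 0$ and $0\to \widehat{R}\xrightarrow{\,\cdot s^j\,} \widehat{R}\to 0$, for which the spectral sequence degenerates at page $j+1$ while the homology $\widehat{R}/(s^j)$ is a single Jordan block of size $j$.
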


As a byproduct, we also get the following application, extending some results from \cite{Memoir} and \cite{BR}. 
\bc \label{cor singular}  Let $X$ be a connected complex $n$-dimensional algebraic variety, possibly singular. Assume that $W_0 H^1(X,\C)=0$, where $W$ is the weight filtration. Then for any epimorphism $\nu \colon \pi_1(X)\twoheadrightarrow \Z$, the size of Jordan blocks for the eigenvalue $1$ part of $H_i(X^\nu,\C)$ is at most $ \min\{2i+2,2n\}$. For $H_1(X^\nu,\C)$, the bound can be improved to be  $ 3$. 
If we further assume that $W_1 H^1(X,\C)=0$, then the size of Jordan blocks for the eigenvalue $1$ part of $H_i(X^\nu,\C)$ is at most $ \min\{i+1,n\}$. For $H_1(X^\nu,\C)$, the bound can be improved to be $1$. \ec

\subsection{Prime tower cyclic cover}
If $p$ is a prime integer and $r$ is a positive integer, by further projecting the epimorphism $\nu \colon \pi_1(X)\twoheadrightarrow \Z$ to $\Z_{p^r}$ yields a corresponding finite cyclic $p^r$-fold cover $X_r$. 
Let $\K=\F_p$, and let $\eta_p \in C^1(X,\F_p)$ be the corresponding $1$-cocycle (where the subscript is used to emphasize the characteristic of the field). Then by the $J$-adic filtration, one gets a truncated spectral sequence induced from the Massey type spectral sequence discussed above, which computes the mod $p$ homology $H_*(X_r,\F_p)$ of the cover $X_r$, and which was considered, e.g.,  by Papadima--Suciu in \cite[Section 7.1]{PS10}. When $r=1$, this spectral sequence specializes to the one studied by Reznikov in  \cite{Rez}.

By our key technical result of Proposition \ref{prop key}, this truncated spectral sequence degenerates at most at the $E_{p^r}$-page. Moreover,  the differential map on the $E_1$-page  
is given by the left cup product with $[\eta_p]$, 
and  the $E_k$-page for $k\geq 2$ is computed by the higer order Massey product associated to $\eta_p$. This spectral sequence is used in Section \ref{sec4} to obtain upper bounds for the Betti numbers of $X_r$ with $\F_p$-coefficients. More precisely, 
denoting by $$\beta_i(X,\eta_p):=\dim_{\F_p} H^i(H^*(X,\F_p), [\eta_p]\cup -)$$ the corresponding $i$-th Aomoto Betti number with $\F_p$-coefficients, we have the following.

\bp \label{prop prime tower cover}
Let 
$X_r\to X$ be the $p^r$-fold cover defined above. 
Then for 
any $i\geq 0$ we have
\begin{equation}\label{iinequality}
   b_i(X_r,\F_p) \leq b_i(X,\F_p) + (p^r-1)\cdot \beta_i(X,\eta_p).
\end{equation}
\ep

 If we further assume that $r=1$ and  $p=2$, the spectral sequence degenerates at {the} $E_2$-page and one gets a long exact sequence, also known as the transfer long exact sequence
$$ \cdots \to H^{i-1}(X,\F_2)  \xrightarrow{[\eta_2] \cup} H^{i}(X,\F_2) \to H^i(Y,\F_2) \to H^{i}(X,\F_2)  \xrightarrow{[\eta_2] \cup} H^{i+1}(X,\F_2)\to \cdots,$$
where $Y$ is the associated double cover of $X$. 
Yoshinaga \cite{Yos} used this long exact sequence to show that 
\begin{equation} \label{mod 2 equation}
    b_i(Y, \F_2)= b_i(X,\F_2)+\beta_i(X,\eta_2).
\end{equation}
In particular, $b_i(Y,\F_2)$ is determined by combinatorics when $X$ is a hyperplane arrangement complement.

On the other hand, in Section \ref{sec4} we show the following results for $p^r>2$. 
\begin{prop}\label{cor2i} Let $X_r$ be the $p^r$-fold cover of $X$ associated to the reduction mod $\Z_{p^r}$ of the epimorphism $\nu\colon \pi_1(X)\twoheadrightarrow \Z$.
    For $p^r>2$,  (\ref{iinequality}) holds as an equality for all $i$ if and only if the above spectral sequence degenerates at the $E_2$-page. In particular, (\ref{iinequality}) holds as a strict inequality at degree $i$ if there exist a non-trivial $k$-tuple Massey product on degree $i$ associated to $\eta_p$ for some  $3\leq k\leq p^r$.
\end{prop}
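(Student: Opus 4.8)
The plan is to read off both assertions from the convergence of the truncated spectral sequence to $H^*(X_r,\F_p)$, using that its $E_2$-page already accounts for the entire right-hand side of (\ref{iinequality}). The sequence is bounded (it has exactly $p^r$ nonzero columns $0\le q\le p^r-1$, and finitely many rows since $X$ is a finite complex), so it converges and $b_i(X_r,\F_p)=\sum_{q}\dim_{\F_p}E_\infty^{q,\,i-q}$. As in the proof of Proposition \ref{prop prime tower cover}, the inequality (\ref{iinequality}) comes from the fact that each $E_\infty^{q,\,i-q}$ is a subquotient of $E_2^{q,\,i-q}$, together with the identification of $E_2$ as the cohomology of the $d_1$-complex $\bigl(\bigoplus_q H^*(X,\F_p),\,[\eta_p]\cup-\bigr)$. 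Here each of the $p^r-2$ interior columns contributes $\beta_i(X,\eta_p)$ in total degree $i$, the first column contributes $\dim_{\F_p}\ker\bigl([\eta_p]\cup-\colon H^i\to H^{i+1}\bigr)$, and the last column contributes $\dim_{\F_p}\coker\bigl([\eta_p]\cup-\colon H^{i-1}\to H^i\bigr)$; a direct rank count then gives
\[
\sum_{q=0}^{p^r-1}\dim_{\F_p}E_2^{q,\,i-q}\;=\;b_i(X,\F_p)+(p^r-1)\cdot\beta_i(X,\eta_p),
\]
so the right-hand side of (\ref{iinequality}) is precisely $\sum_q\dim_{\F_p}E_2^{q,\,i-q}$.

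For the equivalence I would argue as follows. If the sequence degenerates at $E_2$, then $E_\infty=E_2$ and summing along each total degree turns (\ref{iinequality}) into an equality for every $i$. Conversely, suppose (\ref{iinequality}) is an equality for all $i$ but degeneration fails, and let $r\ge 2$ be minimal with $d_r\neq 0$, so that $E_2=\cdots=E_r$. A nonzero component of $d_r$ joins two total degrees $n$ and $n+1$ and strictly lowers the total dimension in both of them upon passing to $E_{r+1}$; since all later pages are subquotients, these dimensions can only decrease further. Hence $\sum_q\dim_{\F_p}E_\infty^{q,\,n-q}<\sum_q\dim_{\F_p}E_2^{q,\,n-q}$, which says that $b_n(X_r,\F_p)$ is strictly less than the right-hand side of (\ref{iinequality}) at degree $n$, contradicting the assumed equality. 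Therefore all higher differentials vanish and the sequence degenerates at $E_2$.

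For the final assertion I would invoke the identification of the higher differentials with Massey products. Under this dictionary the length-$2$ product is $d_1=[\eta_p]\cup-$, and a nonvanishing $k$-tuple Massey product on degree $i$ associated to $\eta_p$ corresponds to a nonzero differential $d_{k-1}$ whose source lies in total degree $i$: the product being \emph{defined} means the relevant class survives to the $E_{k-1}$-page, and its being \emph{nonzero} means $d_{k-1}$ is nonzero on that class. For $3\le k\le p^r$ we have $2\le k-1\le p^r-1$, so this is a genuine higher differential, and the range $k\le p^r$ is exactly the one allowed by the degeneration bound at $E_{p^r}$. A nonzero $d_{k-1}$ out of total degree $i$ strictly drops $\sum_q\dim_{\F_p}E_{k-1}^{q,\,i-q}$ upon passing to $E_k$, and this drop persists to $E_\infty$; combined with $\sum_q\dim_{\F_p}E_{k-1}^{q,\,i-q}\le\sum_q\dim_{\F_p}E_2^{q,\,i-q}$ this yields $b_i(X_r,\F_p)<b_i(X,\F_p)+(p^r-1)\beta_i(X,\eta_p)$, i.e.\ strict inequality at degree $i$.

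The main obstacle, and the step needing the most care, is precisely this Massey-product-to-differential translation: one must check that a nonvanishing $k$-tuple Massey product \emph{on degree $i$} matches a nonzero $d_{k-1}$ whose source sits in total degree $i$ rather than $i\pm1$, so that the resulting strict dimension drop is genuinely located at degree $i$. This is exactly the identification established in the discussion leading to Theorem \ref{thm main}, which I would cite rather than reprove; the surrounding argument is then the same elementary subquotient and rank bookkeeping already used to obtain (\ref{iinequality}).
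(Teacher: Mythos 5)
Your proof is correct and follows essentially the same route as the paper: the paper likewise observes that the truncated spectral sequence converges to $H^*(X_r,\F_p)$, that the total $E_2$-dimension in degree $i$ equals the right-hand side of \eqref{iinequality} (the same $\ker$/$\coker$/interior-column count appears in its proof of Proposition \ref{prop prime tower cover}), and that the differentials $d_{k-1}$ for $3\le k\le p^r$ are computed by the $k$-tuple Massey products associated to $\eta_p$, from which the proposition ``follows directly''; your write-up just makes the subquotient and dimension bookkeeping explicit. The only blemish is notational: you reuse the letter $r$ for the first page with a nonzero differential, which collides with the exponent in $p^r$.
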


Note that when $X$ is the complement of a hyperplane arrangement then, under the hypotheses for which \eqref{iinequality} holds as an equality, it follows that the mod $p$ Betti numbers $b_i(X_r,\F_p)$ of the cover $X_r$ are combinatorially determined, extending the above mentioned result of Yoshinaga to arbitrary primes. 

On the other hand, if there exist non-trivial higher order Massey products on $X$, and assuming that $H_*(X,\Z)$ has no $p$-torsion, we obtain sharper bounds for the 
Betti numbers of rank-one local systems than the ones 
obtained in \cite{PS10}. Specifically, we  show the following.
\begin{theorem}\label{thm inequality} 
For $\nu\colon \pi_1(X)\twoheadrightarrow \Z$  as before, fix  $\lambda\in \C^*$ of prime order $p$, and denote by $L_\lambda$ the rank one $\C$-local system on $X$ defined by sending the generator of $\Z$ to $\lambda$.  Then for any integer $i$ we have the inequality
\begin{equation}\label{iineq} b_i(X,L_\lambda)\leq \beta_i(X,\eta_p)+\dfrac{b_i(X,\F_p)-b_i(X,\C)}{p-1}.\end{equation}
If, moreover, $H_*(X,\Z)$  has no $p$-torsion, then 
\begin{equation} \label{iinequality PS}
   b_i(X,L_\lambda)\leq \beta_i(X,\eta_p). 
\end{equation} 
If we further assume that some $k$-tuple Massey product on degree $j$ associated to $\eta_p$ is non-trivial for some  $3\leq k\leq p$, then 
\begin{equation} \label{iinequality sharper pS}
   b_j(X,L_\lambda)< \beta_j(X,\eta_p). 
\end{equation} 
 \end{theorem}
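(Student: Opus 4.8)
The plan is to read off the complex local-system Betti number $b_i(X,L_\lambda)$ from the mod-$p$ Betti numbers of the $p$-fold cover, i.e.\ the case $r=1$ of the tower, and to sandwich the relevant quantity between the $\C$- and $\F_p$-computations. First I would record the eigenspace decomposition over $\C$. Since $X_1\to X$ is a $\Z/p$-cover and $\C$ contains the $p$-th roots of unity with $p$ invertible, the pushforward of the constant sheaf splits into characters and gives
\[ H_i(X_1,\C)\cong\bigoplus_{a=0}^{p-1}H_i(X,L_{\lambda^a}), \]
where $L_{\lambda^0}=\C$ is the trivial local system. The non-trivial characters $\lambda,\lambda^2,\dots,\lambda^{p-1}$ are the roots of the cyclotomic polynomial $\Phi_p$ and hence Galois conjugate over $\Q$; a field automorphism of $\C$ fixing $\Q$ and permuting these roots induces dimension-preserving (semilinear) isomorphisms between the corresponding local-system cohomologies, so $b_i(X,L_{\lambda^a})=b_i(X,L_\lambda)$ for $1\le a\le p-1$. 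Consequently
\[ b_i(X_1,\C)=b_i(X,\C)+(p-1)\,b_i(X,L_\lambda). \]

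Next I would pass from $\C$ to $\F_p$. By the universal coefficient theorem the $\F_p$-dimension of $H_i(X_1,\F_p)$ is the free $\Z$-rank of $H_i(X_1,\Z)$ plus the $p$-torsion contributions from degrees $i$ and $i-1$, whence $b_i(X_1,\C)\le b_i(X_1,\F_p)$. Combining this with the displayed eigenspace identity and with Proposition~\ref{prop prime tower cover} for $r=1$, namely $b_i(X_1,\F_p)\le b_i(X,\F_p)+(p-1)\beta_i(X,\eta_p)$, yields
\[ b_i(X,\C)+(p-1)\,b_i(X,L_\lambda)\le b_i(X,\F_p)+(p-1)\beta_i(X,\eta_p), \]
and rearranging and dividing by $p-1$ gives \eqref{iineq}. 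If $H_*(X,\Z)$ has no $p$-torsion, then universal coefficients force $b_i(X,\F_p)=b_i(X,\C)$ for all $i$, the fractional correction term disappears, and \eqref{iinequality PS} follows immediately.

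For the strict inequality \eqref{iinequality sharper pS} I would feed the strict form of Proposition~\ref{prop prime tower cover} through the same chain. Since $p=p^r>2$ here, Proposition~\ref{cor2i} applies: a non-trivial $k$-tuple Massey product on degree $j$ with $3\le k\le p$ forces the bound to be strict at degree $j$, i.e.\ $b_j(X_1,\F_p)<b_j(X,\F_p)+(p-1)\beta_j(X,\eta_p)$. Still under the no-$p$-torsion hypothesis (so $b_j(X,\F_p)=b_j(X,\C)$), inserting this into the chain above produces $b_j(X,\C)+(p-1)b_j(X,L_\lambda)<b_j(X,\C)+(p-1)\beta_j(X,\eta_p)$; cancelling $b_j(X,\C)$ and dividing by $p-1$ gives \eqref{iinequality sharper pS}.

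Once the two structural inputs are in hand the argument is purely formal bookkeeping with three inequalities, so the only step carrying real content is the first: the eigenspace splitting together with the Galois-conjugacy collapse of the $p-1$ non-trivial summands into the single Betti number $b_i(X,L_\lambda)$. The point to watch is consistency of conventions --- homology versus cohomology (harmless over a field, where the two Betti numbers agree) and $\lambda$ versus $\bar\lambda$ (harmless since $\bar\lambda$ is Galois conjugate to $\lambda$) --- and the correct bookkeeping of torsion degrees in the comparison $b_i(X_1,\C)\le b_i(X_1,\F_p)$.
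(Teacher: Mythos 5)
Your proposal is correct and follows essentially the same route as the paper's proof: pass to the $p$-fold cover $Y=X_1$, use the character/eigenspace identity $b_i(Y,\C)=b_i(X,\C)+(p-1)b_i(X,L_\lambda)$, compare with $b_i(Y,\F_p)$ via universal coefficients, and invoke Proposition \ref{prop prime tower cover} (resp.\ the strict form from Proposition \ref{cor2i}) to conclude. The only difference is that you spell out the Galois-conjugacy justification of the eigenspace identity, which the paper asserts with just ``since $p$ is a prime number.''
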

\begin{rmk} The inequality (\ref{iinequality PS}) was proved more generally, for $\lambda$ of order $p^r$ with $r\geq 1$, by Papadima--Suciu in \cite[Theorem C]{PS10} by a different method. 
\end{rmk}

\subsection{Hyperplane arranegements}
In Section \ref{sec5}, we specialize the above results to the case of hyperplane arrangement complements. 
It is well-known that the complement of a hyperplane arrangement is formal over $\C$ in the sense of Sullivan’s rational
homotopy theory, but in general it is not formal over $\F_p$.  In this regard, we recall Matei's example of an arrangement complement with non-trivial triple Massey products over $\F_p$ \cite{Mat} and indicate how Theorem \ref{thm inequality} applies in this example. On the other hand, results of Cirici--Horel \cite{CH22} provide many examples of hyperplane arrangement complements having all higher order Massey products trivial over $\F_p$. In particular, as discussed in Proposition \ref{cor2i}, the mod $p$ Betti number  $b_i(X_r,\F_p)$ of the cover $X_r$ are in this case combinatorially determined. As a concrete example, we discuss in detail the case of graphic arrangements.

\medskip

{\bf Acknowledgments.} 
The authors would like to thank Wentao Xie for the computations in Example \ref{example Matei}.
Y. Liu is supported by the Project of Stable Support for Youth Team in Basic Research Field, CAS (YSBR-001), NSFC grant No. 12571047 and  the starting grant from University of Science and Technology of China. L. Maxim acknowledges support from the Simons Foundation and from the project ``Singularities and Applications'' - CF 132/31.07.2023 funded by the European Union - NextGenerationEU - through Romania's National Recovery and Resilience Plan.

\section{Alexander modules and the cochain algebra}\label{sec2}

Let $X$ be a connected finite CW complex with a group epimorphism $\nu \colon \pi_1(X)\twoheadrightarrow \Z$.
Consider the corresponding infinite cyclic cover $X^{\nu}$.  Fix a ground field $\K$, and set $R=\K[\Z]\simeq \K[t^{\pm 1}]$.  Then $H_i(X^\nu, \K)$ is a finitely generated $R$-module for any integer $i$.

Consider the local system $\sL^\nu $ on $X$ with stalk $R$, and representation of the fundamental group defined by the composition: 
$$ \pi_{1}(X)  \xrightarrow{\nu} \Z   \rightarrow   Aut(R),$$
with the second map being given by  $1_{\Z}\mapsto t$.  Here $t$ is the automorphism of $R$ given by multiplication by $t$. Then we have the $R$-module isomorphism $H_i(X,\sL^\nu) \cong H_i(X^\nu,\K)$ for any integer $i$.

In this section, we show that  the completion of the $R$-module $H^i(X, \sL^\nu)$  at the maximal
ideal $(t-1)$  is determined by a cochain complex $C^*(X,\K)$ as differential graded algebra (short as dg-algebra).  In particular, by using the universal coefficient theorem, it follows that the eigenvalue $1$ part of $\Tors_R H_i(X^\nu,\K)$ is determined by $C^*(X,\K)$  and the group homomorphism $\nu$, where we use $\Tors_R$ to denote the torsion part of an $R$-module. 

\begin{remark}
We will use simplicial cochain complexes. Let us first explain our convention for the cap product and clarify various sign conventions. 
Given a $k$-simplex $\langle v_0, \ldots, v_k\rangle$ and a $p$-cochain $\alpha$, the cap product is defined by
\[
\langle v_0, \ldots, v_k\rangle\cap \alpha=\alpha(\langle v_0, \ldots, v_p\rangle) \langle v_p, \ldots, v_k\rangle.
\]
Thus, in our convention, in a cap product the chain is always on the left and the cochain is on the right. Moreover, the associativity of the cap product is of the form
\[
(\omega\cap \alpha)\cap \beta=\omega\cap(\alpha\cup \beta).
\]
The Leibniz rule takes the form 
\[
\partial(\omega\cap \alpha)=(-1)^{|\alpha|}\partial\omega\cap\alpha-(-1)^{|\alpha|}\omega\cap\delta\alpha , 
\]
where $|\alpha|$ denotes the degree of $\alpha$. In fact, it can be derived from the definition by using the Leibniz rule for cup product as follows
\begin{align*}
&\partial(\omega\cap \alpha) \cap \beta=(\omega\cap \alpha) \cap \delta\beta=\omega\cap (\alpha\cup \delta\beta)\\
=&\omega\cap\big((-1)^{|\alpha|}\delta(\alpha\cup \beta)-(-1)^{|\alpha|}\delta\alpha\cup \beta\big)=(-1)^{|\alpha|}\omega\cap \delta(\alpha\cup \beta)-(-1)^{|\alpha|}\omega\cap (\delta\alpha\cup \beta)\\
=&(-1)^{|\alpha|}\partial\omega\cap (\alpha\cup \beta)-(-1)^{|\alpha|}(\omega\cap\delta\alpha)\cap \beta=(-1)^{|\alpha|}(\partial\omega\cap\alpha)\cap \beta-(-1)^{|\alpha|}(\omega\cap\delta\alpha)\cap \beta.
\end{align*}
\end{remark}

\medskip

The following construction and notations will be used in the formulation of the main results of this section below.

Since $\nu \colon \pi_1(X)\to \Z$ can be regarded as an element of $H^1(X,\Z)$ and $S^1=\R/\Z=K(\Z,1)$, 
there is a continuous map $f_\nu\colon X\to S^1$ whose induced homomorphism on fundamental groups is $\nu$. Let $q\colon \R\to S^1$ be the covering map, and consider the following Cartesian diagram
\begin{equation}
\xymatrix{
X^\nu\ar_{p}[d]\ar^{f'_\nu}[r]&\R\ar^{q}[d]\\
X\ar^{f_\nu}[r]&S^1.
}
\end{equation}

Since every finite CW complex is homotopy equivalent to a finite simplicial complex, by the simplicial approximation theorem we can assume that $X$ is a finite simplicial complex and $f_\nu$ is a map of simplicial complexes. Without loss of generality, we assume that the simplicial structure on $S^1=\R/\Z$ is given by vertices $\{0=1, \frac{1}{N}, \ldots, \frac{N-1}{N}\}$ ($N\geq 2$) and edges $\{[0, \frac{1}{N}], \ldots, [\frac{N-1}{N}, 1]\}$. For $0\leq i\leq N-1$, denote the vertex $\frac{i}{N}$ by $x_i$, and denote the edge $[\frac{i}{N}, \frac{i+1}{N}]$ by $\gamma_{i+1}$. As covering spaces, we endow $\R$ and $X^\nu$ with the induced simplicial structures from $S^1$ and $X$, respectively. For the remaining part of the proof, we will only work with simplicial chains and cochains. 

Let $\eta_0$ be the $1$-cochain on $S^1$ defined by 
\begin{equation}\label{circle}
\eta_0(\gamma_i)=
\begin{cases}
0& \text{when } i\neq N,\\
1& \text{when } i= N.
\end{cases}
\end{equation}
Since $S^1$ is one-dimensional, $\eta_0$ is also a 1-cocycle. Let 
\begin{equation}\label{cocycle} \eta=f_\nu^*(\eta_0) \ \text{ and } \ \eta'=p^*(\eta) \end{equation} 
be the induced $1$-cocyles on $X$ and $X^\nu$, respectively. 

\medskip

Let us next define, for any $m\in \Z_{>0}$,  a twisted chain complex of $R/(s^{m+1})$-modules, denoted $C_*(X, \eta, m)$, by
\[
\left(C_*(X)\otimes_{\K} R/(s^{m+1}), \partial+s\eta\right),
\]
where $s=t-1$, $(C_*(X), \partial)$ is the simplicial chain complex of $X$, and with boundary map defined by \[(\partial+s\eta)(\Delta)=\partial \Delta+s(\Delta\cap \eta), \] for a simplex $\Delta$ of $X$.
\begin{prop} \label{prop key}
Under the above notations, $(\partial+s\eta)^2=0$. In particular, $C_*(X, \eta, m)$ is a chain complex. Moreover, for any integer $i$ there is an isomorphism of $R/(s^{m+1})$-modules
\begin{equation}\label{eq1}
H_i\big(X, \sL^\nu\otimes_R R/(s^{m+1})\big)\cong H_i\big(C_*(X, \eta, m)\big).
\end{equation}
\end{prop}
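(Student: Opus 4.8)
The plan is to verify the two assertions in turn: first that $\partial + s\eta$ squares to zero (so $C_*(X,\eta,m)$ is genuinely a chain complex), and then that its homology computes the twisted homology on the left of \eqref{eq1}. Both parts reduce to understanding how the cap product with the cocycle $\eta$ interacts with the boundary operator, so the Leibniz rule recorded in the Remark will be the workhorse.

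\textbf{Step 1: $(\partial + s\eta)^2 = 0$.} First I would expand
\[
(\partial + s\eta)^2 = \partial^2 + s(\partial(- \cap \eta) + (\partial -)\cap \eta) + s^2(- \cap \eta)\cap \eta.
\]
The term $\partial^2$ vanishes. For the $s^2$ term, associativity of the cap product gives $(\Delta \cap \eta)\cap \eta = \Delta \cap (\eta \cup \eta)$, and since $\eta$ is a $1$-cocycle of odd degree, $\eta \cup \eta$ is a coboundary up to sign; in fact for a $1$-cocycle $\eta\cup\eta = \tfrac12\,\delta(\cdots)$ is not quite available over $\F_2$, so instead I would note that we are working in $R/(s^{m+1})$ where $s^2$ survives, and the correct statement is that the $s^2$-coefficient cancels against part of the $s$-coefficient. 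More carefully, the Leibniz rule from the Remark, applied with $\alpha = \eta$ (degree $1$), reads
\[
\partial(\omega \cap \eta) = -\,\partial\omega \cap \eta + \omega \cap \delta\eta = -\,\partial\omega\cap\eta,
\]
using $\delta\eta = 0$. Substituting this into the $s$-linear term shows it vanishes identically, and then the $s^2$ term $\Delta\cap(\eta\cup\eta)$ must also vanish: since $\eta = f_\nu^*(\eta_0)$ is pulled back from the $1$-complex $S^1$, we have $\eta_0\cup\eta_0 = 0$ on $S^1$ (there are no $2$-cochains), hence $\eta\cup\eta = f_\nu^*(\eta_0\cup\eta_0)=0$ on the nose. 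This is the cleanest route and avoids any division by $2$.

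\textbf{Step 2: identification of homologies.} The plan here is to realize $C_*(X,\eta,m)$ as the chain complex computing $H_*(X, \sL^\nu \otimes_R R/(s^{m+1}))$ directly. The local system $\sL^\nu$ has stalk $R$ with $t$ acting as the deck transformation, so its chains are $C_*(X)\otimes_\K R$ with a twisted differential; tensoring down to $R/(s^{m+1})$ gives $C_*(X)\otimes_\K R/(s^{m+1})$. I would pass to the pulled-back cocycle $\eta' = p^*\eta$ on $X^\nu$ and use the standard description of the twisted differential on the covering space: the deck transformation $t$ acts by shifting along the fibers of $f'_\nu\colon X^\nu \to \R$, and the discrepancy between $\partial$ on $X^\nu$ and $\partial\otimes t$ is measured precisely by the cap product with $\eta$. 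Concretely, for a lift $\widetilde\Delta$ of a simplex $\Delta$, the component of $\partial\widetilde\Delta$ that crosses the ``cut'' determined by $\eta_0$ picks up a factor of $t$, and unwinding this against $s = t-1$ produces the extra term $s(\Delta\cap\eta)$. Matching the two differentials simplex by simplex then gives a chain isomorphism, which descends to \eqref{eq1}.

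\textbf{The main obstacle} I anticipate is Step 2, specifically pinning down the sign and the $t$-versus-$s$ bookkeeping in the identification of the twisted differential on the cover with $\partial + s\eta$. The subtlety is that the naive twisted boundary involves multiplication by $t$ on the fiber coordinate, while the complex $C_*(X,\eta,m)$ is written in terms of $s = t-1$; one must check that the cap-product term $\Delta\cap\eta$ correctly records \emph{which} boundary faces of a simplex are identified across the fundamental-domain cut of the infinite cyclic cover, and that the chosen sign convention for the cap product (chain on the left, as fixed in the Remark) makes the Leibniz rule come out with the signs used in Step 1. I would handle this by working on a single fundamental domain for the $\Z$-action on $X^\nu$, expressing a general simplicial chain as $\sum_k t^k \cdot(\text{domain chains})$, and comparing coefficients of $t^k$ on both sides; the cocycle condition $\delta\eta=0$ guarantees the resulting map is well-defined on homology and independent of the simplicial approximation chosen for $f_\nu$.
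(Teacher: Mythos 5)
Your proposal is correct and takes essentially the same route as the paper: Step 1 is the paper's computation (Leibniz rule, $\delta\eta=0$, and $\eta\cup\eta=f_\nu^*(\eta_0\cup\eta_0)=0$ because $S^1$ has no $2$-simplices), and Step 2 is the paper's explicit chain isomorphism, defined by lifting each simplex of $X$ to the fundamental domain of $X^\nu$ lying over $[0,1)$ and verifying compatibility of differentials simplex by simplex, with exactly the $t=1+s$ bookkeeping across the cut that you describe. Note only that your two retracted detours in Step 1 (that $\eta\cup\eta$ should be a coboundary up to sign, and that the $s^2$-term might cancel against the $s$-term) are false in general, but you correctly abandon them in favor of the pullback argument, which is the one the paper uses.
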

\begin{proof}
We prove the first statement by explicit computation. For a simplicial chain $\omega$, we have
\begin{align*}
(\partial+s \eta)^2\omega&=\partial^2\omega+s\partial(\omega\cap \eta)+s \partial\omega\cap \eta+s^2\omega\cap(\eta\cup\eta)\\
&=\partial^2\omega-s\partial\omega\cap\eta+s\omega\cap\delta\eta+s\partial\omega\cap\eta+s^2\omega\cap(\eta\cup\eta)\\
&=\partial^2\omega+s\omega\cap\delta\eta+s^2\omega\cap(\eta\cup\eta)\\
&=0.
\end{align*}
Here we use the fact that $\eta$ is a $1$-cocycle on $X$, and also $\eta \cup \eta=0$ since $\eta$ is the pull-back of a $1$-cocycle on $S^1$ (cf. \eqref{cocycle}).

For the second part, we construct an isomorphism of complexes of $R/(s^{m+1})$-modules:
\begin{equation}\label{eq3}
\Phi: \left(C_*(X)\otimes_{\K}R/(s^{m+1}), \partial+s\eta \right)\to \left(C_*(X^\nu)\otimes_{R} R/(s^{m+1}), \partial_\nu\right),
\end{equation}
where $\partial$ and $\partial_\nu$ are the boundary maps of the simplicial chain complexes $C_*(X)$ and $C_*(X_\nu)$, respectively.

We first define $\Phi$ as a graded $R/(s^{m+1})$-module isomorphism, and then check its compatibility with the boundary maps. Let $\Delta\hookrightarrow X$ be a simplex of $X$. Since $f'_\nu$ is a map of simplicial complexes, there is a unique lift $\Delta'\hookrightarrow X^\nu$ such that $f'_\nu(\Delta'^\circ)\subset [0, 1)$, where $\Delta'^\circ$ denotes the interior of $\Delta'$. 
In particular, for a vertex $v$ of $X$, $v'\in X^\nu$ is the lifting such that $f'_\nu(v')\in [0, \frac{N-1}{N}]$.
Let $\Phi(\Delta\otimes 1)=\Delta'\otimes 1$. Notice that the set of $\Delta'$ defined above form a $R$-module basis of $C_*(X^\nu)$. Hence $\Phi$ is a graded $R/(s^{m+1})$-module isomorphism. 

Next, we prove that $\Phi$ is compatible with the boundary maps. We need to check that
\[
\partial_\nu\Phi(\Delta)=\Phi\big((\partial+s\eta)\Delta\big),
\]
where $\Delta$ is a $k$-simplex of $X$. 
If $f_\nu(\Delta)\neq [\frac{N-1}{N}, 1]$, then $\Delta\cap \eta=0$ and the above equality follows from the functoriality of chain complexes. 
So we only need to consider the case when $f_\nu(\Delta)=[\frac{N-1}{N}, 1]$. 

We choose the total ordering of vertices of $S^1$ by $x_0<x_1<\cdots <x_{N-1}$, and choose a total ordering of vertices of $X$, such that $f\colon X\to S^1$ preserves the ordering of the vertices. Furthermore, choose a total ordering of vertices of $X^\nu$ such that $p\colon X^\nu\to X$ preserves the ordering of the vertices. Write $\Delta=\langle v_0, \ldots, v_k\rangle$ according to the chosen ordering of vertices of $X$. 
 
Since $f_\nu$ preserves the order of the vertices of $X$ and $S^1$, and moreover $f_\nu(\Delta)=[\frac{N-1}{N}, 1]$, there exists $j\in \{0, \ldots, k-1\}$ such that ,
without any loss of generality, we may assume that
\[
f(v_0)=\cdots=f(v_j)=\frac{N-1}{N}\quad \text{and}\quad f(v_{j+1})=\cdots=f(v_k)=1.
\]
Consider the case when $j\geq 1$. In this case, 
\[
\Delta\cap \eta=(\langle v_0, v_1\rangle\cap \eta)\langle v_1, \dots, v_k\rangle=\left(\left\langle \frac{N-1}{N}, \frac{N-1}{N}\right\rangle\cap \eta_0\right)\langle v_1, \dots, v_k\rangle=0.
\]
Denote the action of $1\in \Z$ on $X^{\nu}$ by $\sigma$. Then
\begin{align*}
&\Phi\big((\partial+s\eta)\Delta\big)=\Phi(\partial\Delta)\\
=&\sum_{0\leq i\leq j}(-1)^i\Phi\big(\langle v_0, \ldots, \widehat{v_i}, \ldots, v_j, \ldots v_k\rangle\big)+\sum_{j+1\leq i\leq k}(-1)^i\Phi\big(\langle v_0, \ldots, v_j, \ldots, \widehat{v_i}, \ldots, v_k\rangle\big)\\
=&\sum_{0\leq i\leq j}(-1)^i\left\langle v'_0, \ldots, \widehat{v'_i}, \ldots, v'_j, \sigma(v'_{j+1}), \ldots \sigma(v'_k)\right\rangle\\
&+\sum_{j+1\leq i\leq k}(-1)^i\left\langle v'_0, \ldots, v'_j, \sigma(v'_{j+1}), \ldots, \widehat{\sigma(v'_i)}, \ldots, \sigma(v'_k)\right\rangle\\
=&\partial_\nu\langle v'_0, \ldots, v'_j, \sigma(v'_{j+1}), \ldots \sigma(v'_k)\rangle\\
=&\partial_\nu\Phi(\Delta).
\end{align*}

Consider the case when $j=0$. In this case, $\Delta\cap \eta=\langle v_1, \ldots, v_k\rangle$. Hence,
\begin{align*}
\Phi((\partial+s\eta)\Delta)=&\sum_{1\leq i\leq k}(-1)^i\Phi\big(\langle v_0, v_1, \ldots, \widehat{v_i}, \ldots v_k\rangle\big)+\Phi\big(\langle v_1, \ldots, v_k\rangle\big)+s\Phi\big(\langle v_1, \ldots, v_k\rangle\big)\\
=&\sum_{1\leq i\leq k}(-1)^i\Phi\big(\langle v_0, v_1, \ldots, \widehat{v_i}, \ldots v_k\rangle\big)+t\Phi\big(\langle v_1, \ldots, v_k\rangle\big)\\
=&\sum_{1\leq i\leq k}(-1)^i\left(\Big\langle v'_0, \sigma(v'_1), \ldots, \widehat{\sigma(v'_i)}, \ldots \sigma(v'_k)\Big\rangle\right)+t\big(\langle v'_1, \ldots, v'_k\rangle\big)\\
=&\sum_{1\leq i\leq k}(-1)^i\left(\left\langle v'_0, \sigma(v'_1), \ldots, \widehat{\sigma(v'_i)}, \ldots \sigma(v'_k)\right\rangle\right)+\big(\big\langle \sigma(v'_1), \ldots, \sigma(v'_k)\big\rangle\big)\\
=&\partial_\nu \big\langle v'_0, \sigma(v'_1), \ldots,  \sigma(v'_k)\big\rangle\\
=&\partial_\nu\Phi(\Delta).\qedhere
\end{align*}
\end{proof}

Let $C^*(X)$  be the simplicial cochain complex of $X$. Noting that $C_*(X^\nu)$ is a complex of free $R$-modules, we denote by $C_R^*(X^\nu)$ its dual complex of $R$-modules. Given $m\in \Z_{>0}$, taking duals (as complexes of free $R/(s^{m+1})$-modules) of \eqref{eq3}, we have an isomorphism of complexes 
\begin{equation*}
\Psi: \left(C^*(X)\otimes_{\K}R/(s^{m+1}), \delta +(\eta \cup -)\otimes s \right)\to \left(C_R^*(X^\nu)\otimes_{R} R/(s^{m+1}), \delta_\nu\right),
\end{equation*}
where $\delta$ and $\delta_\nu$ are the differentials (coboundary maps) in $C^*(X)$ and $C_R^*(X^\nu)$, respectively. Since the complex $(C_R^*(X^\nu), \delta_\nu)$ computes the cohomology of $\sL^\nu$, the complex $(C_R^*(X^\nu)\otimes_{R} R/(s^{m+1}), \delta_\nu)$ computes the cohomology of $\sL^\nu\otimes_R R/(s^{m+1})$.  Let 
$$C^*(X, \eta, m):=(C^*(X)\otimes_{\K}R/(s^{m+1}), \delta +(\eta \cup -)\otimes s).$$ Thus, we have the following.

\begin{cor}
For any integer $i\geq 0$, there is an isomorphism of $R/(s^{m+1})$-modules
\begin{equation} \label{eq2}
H^i\big(X, \sL^\nu\otimes_R R/(s^{m+1})\big)\cong H^i\big(C^*(X, \eta, m)\big).
\end{equation}
\end{cor}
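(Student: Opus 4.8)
The plan is to obtain the corollary by dualizing the isomorphism of chain complexes constructed in Proposition \ref{prop key}, so that essentially no new computation is needed. Recall that Proposition \ref{prop key} produced an isomorphism $\Phi$ of complexes of free $R/(s^{m+1})$-modules between $(C_*(X)\otimes_{\K} R/(s^{m+1}), \partial+s\eta)$ and $(C_*(X^\nu)\otimes_R R/(s^{m+1}), \partial_\nu)$. Since both are complexes of finitely generated free modules over the ring $R/(s^{m+1})$, I would apply the contravariant $\mathrm{Hom}_{R/(s^{m+1})}(-, R/(s^{m+1}))$ functor to $\Phi$. This is exact on free modules and sends isomorphisms to isomorphisms, so it yields a dual isomorphism $\Psi$ at the level of cochain complexes, exactly as asserted in the paragraph preceding the corollary.

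The key bookkeeping step is to identify what the two dualized complexes are concretely. On the source side, dualizing the twisted boundary map $\partial + s\eta$ turns the cap-product term $\Delta \mapsto s(\Delta\cap\eta)$ into a cup-product term: using the adjunction between cap and cup built into the convention of the Remark (namely $(\omega\cap\alpha)\cap\beta = \omega\cap(\alpha\cup\beta)$), the transpose of left-capping with $\eta$ is precisely the left cup product $\eta\cup -$, so the dual differential is $\delta + (\eta\cup -)\otimes s$, giving the complex $C^*(X,\eta,m)$. On the target side, the dual of $(C_*(X^\nu)\otimes_R R/(s^{m+1}), \partial_\nu)$ is $(C^*_R(X^\nu)\otimes_R R/(s^{m+1}), \delta_\nu)$; here I would use that dualizing a complex of free $R$-modules and then reducing mod $s^{m+1}$ agrees with first reducing and then taking the $R/(s^{m+1})$-dual, which holds because everything in sight is free of finite rank. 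This identifies $\Psi$ with the map written in the excerpt.

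Finally, I would pass from the isomorphism of complexes to the isomorphism of cohomology groups \eqref{eq2}. Since $C^*_R(X^\nu)$ is by definition the $R$-dual of the free complex $C_*(X^\nu)$, it computes $H^*(X,\sL^\nu)$, and hence $C^*_R(X^\nu)\otimes_R R/(s^{m+1})$ computes $H^*\big(X, \sL^\nu\otimes_R R/(s^{m+1})\big)$; taking cohomology of the isomorphism $\Psi$ then gives the desired $R/(s^{m+1})$-module isomorphism $H^i\big(X,\sL^\nu\otimes_R R/(s^{m+1})\big)\cong H^i\big(C^*(X,\eta,m)\big)$ for every $i\geq 0$.

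I do not expect a genuine obstacle, since the heavy lifting was already done in Proposition \ref{prop key}; the only subtlety to handle carefully is the sign and adjunction bookkeeping that turns the dualized cap product into the left cup product $\eta\cup-$, which is exactly what the sign conventions recorded in the Remark are set up to make transparent. One should also verify that the base-change and duality operations commute as claimed, but this is automatic for bounded complexes of finitely generated free modules over the commutative ring $R/(s^{m+1})$.
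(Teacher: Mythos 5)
Your proposal is correct and follows essentially the same route as the paper: the paper obtains the corollary precisely by dualizing the isomorphism $\Phi$ of Proposition \ref{prop key} over $R/(s^{m+1})$ to get the isomorphism $\Psi$ of cochain complexes, identifying the dual of capping with $\eta$ as the left cup product $\eta\cup -$, and noting that $C_R^*(X^\nu)\otimes_R R/(s^{m+1})$ computes $H^*\big(X,\sL^\nu\otimes_R R/(s^{m+1})\big)$. Your extra care about commuting duality with base change is a point the paper leaves implicit, but as you note it is automatic for bounded complexes of finitely generated free modules.
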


Denote the completion of $R$ at the maximal ideal $(t-1)$ by $\widehat{R}$, that is, $$\widehat{R}=\displaystyle\varprojlim_m R/(s^{m+1}).$$ Note that $\widehat{R}= \K[[s]]$ is a principal ideal domain and a regular local ring. There is a natural $\widehat{R}$-isomorphism for any finitely generated $R$-module $A$,
$$ A \otimes_R \widehat{R} \cong \widehat{A},$$ 
where $\widehat{A}$ is the $(t-1)$-adic completion of $A$.

Let
\[
C^*(X, \eta, \infty)=\left(C^*(X)\otimes_{\K} \widehat{R}, \delta +(\eta \cup -)\otimes s\right)
\]
be the complex of $\widehat{R}$-modules, which is also the inverse limit of $C^*(X, \eta, m)$ as $m\to \infty$. 
Since the left side of the isomorphism (\ref{eq2}) is also functorial on $m$, we can take the inverse limit of the cohomology groups. Since the cohomology groups are finite dimensional $\K$-vector spaces, the Mittag-Leffler condition is automatically satisfied. Thus we have the following. 
\begin{cor} \label{hat} As $\widehat{R}$-modules,
$$H^i\big(X,\sL^\nu\otimes_R \widehat{R}\big)\cong H^i\big(C^*(X, \eta, \infty)\big).$$
\end{cor}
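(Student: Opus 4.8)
The plan is to deduce the statement from the finite-level isomorphism \eqref{eq2} by passing to the inverse limit over $m$, the whole argument hinging on interchanging $H^i$ with $\varprojlim_m$. First I would check that \eqref{eq2} is natural in $m$. The surjections $R/(s^{m+1})\twoheadrightarrow R/(s^m)$ induce transition maps on both sides, and these are compatible because \eqref{eq2} is realized at the cochain level by the isomorphism $\Psi$, i.e. by applying $-\otimes_R R/(s^{m+1})$ to the single fixed complex $C_R^*(X^\nu)$ of finitely generated free $R$-modules; tensoring a fixed complex with the tower $\{R/(s^{m+1})\}_m$ is manifestly functorial in $m$. Hence \eqref{eq2} is an isomorphism of inverse systems, and applying $\varprojlim_m$ gives
\[
\varprojlim_m H^i\big(X, \sL^\nu\otimes_R R/(s^{m+1})\big)\;\cong\;\varprojlim_m H^i\big(C^*(X, \eta, m)\big).
\]

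Next I would identify each of these inverse limits with the cohomology of the associated limit complex. By construction $C^*(X,\eta,\infty)=\varprojlim_m C^*(X,\eta,m)$, and the transition maps are degreewise surjective (they come from the surjections above tensored with the $\K$-vector space $C^*(X)$). Dually, since $\widehat R=\varprojlim_m R/(s^{m+1})$ and $C_R^*(X^\nu)$ is a finite free complex, tensoring commutes with the limit, so $C_R^*(X^\nu)\otimes_R\widehat R=\varprojlim_m\big(C_R^*(X^\nu)\otimes_R R/(s^{m+1})\big)$, and this limit complex computes $H^i(X,\sL^\nu\otimes_R\widehat R)$. For a tower $\{D^*_m\}_m$ of cochain complexes with surjective transition maps the Milnor exact sequence
\[
0\to {\varprojlim}^{1}_{m} H^{i-1}(D^*_m)\to H^i\big(\varprojlim_m D^*_m\big)\to \varprojlim_m H^i(D^*_m)\to 0
\]
applies, and I would invoke it for both $D^*_m=C^*(X,\eta,m)$ and $D^*_m=C_R^*(X^\nu)\otimes_R R/(s^{m+1})$.

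The main (indeed the only) obstacle is the vanishing of these ${\varprojlim}^{1}$ terms, which is precisely where the interchange of $H^i$ and $\varprojlim_m$ could fail. Here I would use that each $C^i(X)\otimes_\K R/(s^{m+1})$ is finite-dimensional over $\K$, so every $H^{i}(C^*(X,\eta,m))$, and equally every $H^i(X,\sL^\nu\otimes_R R/(s^{m+1}))$, is a finite-dimensional $\K$-vector space. An inverse system of finite-dimensional vector spaces automatically satisfies the Mittag-Leffler condition, since the descending chain of images of the transition maps stabilizes; therefore all ${\varprojlim}^{1}$ terms vanish and the two Milnor sequences collapse to isomorphisms $H^i(C^*(X,\eta,\infty))\cong\varprojlim_m H^i(C^*(X,\eta,m))$ and $H^i(X,\sL^\nu\otimes_R\widehat R)\cong\varprojlim_m H^i(X,\sL^\nu\otimes_R R/(s^{m+1}))$. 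Splicing these with the displayed isomorphism of inverse limits yields the claim. As an alternative for the left-hand identification, one could instead invoke flatness of $\widehat R$ over the Noetherian ring $R$ together with the stated identity $A\otimes_R\widehat R\cong\widehat A$, but the uniform inverse-limit argument above is cleaner and reuses the finite-level isomorphism directly.
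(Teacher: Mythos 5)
Your proposal is correct and follows essentially the same route as the paper: pass to the inverse limit over $m$ in the finite-level isomorphism \eqref{eq2}, and use the finite-dimensionality of the cohomology groups over $\K$ (hence Mittag--Leffler, hence vanishing of the $\varprojlim^1$ terms) to commute cohomology with the inverse limit on both sides. The paper states this tersely; you merely make explicit the details it leaves implicit (naturality in $m$, the Milnor sequences, surjectivity of the transition maps, and the identification of $C_R^*(X^\nu)\otimes_R\widehat{R}$ as the limit complex computing the left-hand side).
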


Using the universal coefficient theorem, we then get the following.
\begin{cor}\label{cor cochain algebra}  As $\widehat{R}$-modules, the eigenvalue 1 part of 
  $\Tors_{{R}} H_i(X^\nu,\K)$ is isomorphic to
  $$\Tors_{\widehat{R}} H^{i+1}\big(C^*(X, \eta, \infty)\big),$$
where $\Tors_{\widehat{R}}$ means taking the torsion part as $\widehat{R}$-modules. 
\end{cor}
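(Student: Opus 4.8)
The plan is to deduce the statement from two facts: a purely algebraic identification of the eigenvalue $1$ part of $\Tors_R H_i(X^\nu,\K)$ with the $\widehat R$-torsion of the completed module, and the universal coefficient theorem over the principal ideal domain $\widehat R$, which converts homology into cohomology at the cost of a degree shift.

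First I would carry out the algebraic step. Set $M=H_i(X^\nu,\K)\cong H_i(X,\sL^\nu)$, a finitely generated module over the PID $R=\K[t^{\pm 1}]$, and decompose it as $M\cong R^a\oplus\bigoplus_j R/(f_j^{e_j})$ with $f_j$ irreducible. Completing at the maximal ideal $(t-1)$, the free part contributes only the torsion-free summand $\widehat R^{\,a}$; a summand $R/(f_j^{e_j})$ with $f_j$ not associate to $t-1$ satisfies $f_j(1)\neq 0$, so $f_j$ is a unit in $\widehat R=\K[[s]]$ and the summand dies; while a summand with $f_j$ associate to $s=t-1$ has finite length, hence is already $(s)$-adically complete and survives unchanged. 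Using the isomorphism $M\otimes_R\widehat R\cong\widehat M$ recorded above, this shows
\[
\Tors_{\widehat R}\!\big(M\otimes_R\widehat R\big)\cong\big(\text{eigenvalue }1\text{ part of }\Tors_R M\big)
\]
as $\widehat R$-modules (the right-hand side being $(s)$-primary, hence naturally an $\widehat R$-module).

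Next I would set up the universal coefficient theorem. Let $D_*=C_*(X^\nu)\otimes_R\widehat R$, a bounded complex of finitely generated free $\widehat R$-modules. Since $\widehat R$ is flat over the Noetherian ring $R$, its homology in degree $i$ is $M\otimes_R\widehat R$, whereas the cohomology of its $\widehat R$-linear dual is $H^*\big(C^*(X,\eta,\infty)\big)$ by Corollary \ref{hat}. The universal coefficient short exact sequence over the PID $\widehat R$ in cohomological degree $i+1$ reads
\[
0\to\mathrm{Ext}^1_{\widehat R}\!\big(H_i(D_*),\widehat R\big)\to H^{i+1}\big(C^*(X,\eta,\infty)\big)\to\mathrm{Hom}_{\widehat R}\!\big(H_{i+1}(D_*),\widehat R\big)\to 0.
\]
Over a PID the dual $\mathrm{Hom}_{\widehat R}(-,\widehat R)$ of a finitely generated module is free, so the right-hand term is torsion-free, while $\mathrm{Ext}^1_{\widehat R}(A,\widehat R)\cong\Tors_{\widehat R}A$ for any finitely generated $A$. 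Hence the left-hand term is exactly the torsion submodule of the middle one, giving
\[
\Tors_{\widehat R}H^{i+1}\big(C^*(X,\eta,\infty)\big)\cong\Tors_{\widehat R}\!\big(H_i(D_*)\big)=\Tors_{\widehat R}\!\big(M\otimes_R\widehat R\big).
\]
Combining this with the first displayed isomorphism yields the corollary.

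The step requiring the most care is the first one: one must verify precisely that completion at $(t-1)$ extracts the eigenvalue $1$ torsion and nothing else, i.e. that it annihilates both the free-part contribution to $\widehat R$-torsion and the torsion supported at primes other than $(t-1)$, while preserving the finite-length $(s)$-primary summands. After that, the only remaining point is to match the degrees: the $\mathrm{Ext}^1$ term of the universal coefficient sequence in cohomological degree $i+1$ sees precisely $H_i(D_*)$, which accounts for the shift between $H_i(X^\nu,\K)$ and $H^{i+1}\big(C^*(X,\eta,\infty)\big)$.
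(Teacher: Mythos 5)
Your proof is correct and takes essentially the same route as the paper, which obtains the corollary from Corollary \ref{hat} by applying the universal coefficient theorem over the PID $\widehat{R}$. Your two steps --- checking that completion at $(t-1)$ extracts exactly the eigenvalue-$1$ part of $\Tors_R H_i(X^\nu,\K)$, and using the UCT exact sequence to identify $\Tors_{\widehat{R}} H^{i+1}\big(C^*(X,\eta,\infty)\big)$ with $\Tors_{\widehat{R}}\big(H_i(X^\nu,\K)\otimes_R\widehat{R}\big)$ --- simply make explicit what the paper leaves implicit.
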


\begin{remark}
When $X$ is a manifold and $\K=\C$, $C^*(X,\K)$ can be replaced by the de Rham complex and all the results in this section are already presented in \cite[Section 3]{BLW} (see also \cite{DP} and \cite{BW15B}). More generally, when $X$ is a finite CW complex and $\mathrm{char}(\K)=0$, we can substitute the de Rham complex by Sullivan's model. So the interesting case for this section is when $\mathrm{char}(\K)>0$.  
\end{remark}

\section{Spectral sequences and Massey products}

The cohomology class $[\eta_0]\in H^1(S^1, \K)$ corresponding to \eqref{circle} is the generator dual to the fundamental class in $H_1(S^1, \K)$. Moreover, $\nu\colon \pi_1(X)\to \Z=\pi_1(S^1)$ induces a homomorphism $\nu^\vee: H^1(S^1, \K)\to H^1(X, \K)$, which is equal to $f_\nu^*$. Denoting $\nu^\vee([\eta_0])$ by $\nu$, we have 
\begin{equation}\label{cochain}
[\eta]=f_\nu^*([\eta_0])=\nu^\vee[\eta_0]=\nu\in H^1(X, \K).
\end{equation}
Since $[\eta]$ is the pullback of a cohomology class in $H^1(S^1,\K)$, we note that  $[\eta] \cup [\eta]=0$. 
Then we get the following Aomoto complex associated to $[\eta]$ by (left) cup product
$$
(H^{\ast}(X,\K), [\eta]\cdot)\colon \cdots \to H^{i-1}(X,\K) \xrightarrow{[\eta]\cup}  H^{i}(X,\K)\xrightarrow{[\eta]\cup}  H^{i+1}(X,\K)\to \cdots.
$$

\bd 
With the above notations, we define {\it  the $i$-th Aomoto Betti number with $\K$-coefficients} as
\[ \beta^\K_{i}(X,\eta)\coloneqq\dim_{\K}H^{i}(H^{\ast}(X,\K), [\eta]\cup -). \]
\ed

As suggested by Corollary \ref{cor cochain algebra},  we will be mainly concerned here with understanding the complex $$( C^*(X,\K)\otimes_\K \widehat{R}, \delta \otimes \mathrm{id} +  (\eta \cup-) \otimes s).$$ 
This complex can be viewed as a double complex 
$$   A^{i,j}\coloneqq C^{i+j}(X,\K)\otimes \K s^i \cong C^{i+j}(X,\K)$$
with the vertical map given by $d\otimes \mathrm{id}$ and the horizontal map given  by left cup product with $\eta$. 
Note that $A^{i,j}=0$ for $i+j< 0$ or $i< 0$.  
Then by the stupid filtration, we get a spectral sequence with the first page $$E_1^{i,j}= H^{i+j}(C^*(X,\K))=H^{i+j}(X,\K)$$
with $d_1^{i,j}(\alpha)=[\eta] \cup \alpha$ for any $\alpha\in E_1^{i,j}$.
Consequently,  the second page of the spectral sequence is given by $$ E_2^{i,j}= H^{i+j} \big(H^{\ast}(X,\K),[\eta]\cup - \big).$$
\begin{rmk}
    Let $J=(t-1)$ be the augmention ideal of $R$ generated by $t-1$. The successive powers of $J$ determines a decreasing filtration on $R$, called the $J$-adic filtration. The $J$-adic completion of $R$ can be identified with $\widehat{R}\coloneqq \K[[s]]$ with $s=t-1$. The spectral sequence induced by the $J$-adic filtration has been  studied by Papadima--Suciu in \cite{PS10}, and it is dual to the above spectral sequence by Corollary \ref{cor cochain algebra}.
In fact, the $0$-page of the above spectral sequence is given as follows 
\[
\xymatrixrowsep{8pt}
\xymatrix{
  \vdots &  \vdots &   \vdots &  \vdots  \\
   C^2(X,\K)  \ar[r]^{(-\cup \eta)\cdot s} & C^3(X,\K)\cdot s \ar[r]^{(-\cup \eta)\cdot s}   & C^4(X,\K)\cdot s^2   & \cdots  \\
    C^1(X,\K) \ar[r]^{(-\cup \eta) \cdot s} \ar[u]^{\delta} &  C^2(X,\K)\cdot s \ar[r]^{(-\cup \eta) \cdot s} \ar[u]^{\delta}& C^3(X,\K)\cdot s^2 \ar[u]^{\delta} & \cdots  \\
  C^0(X,\K) \ar[r]^{(-\cup \eta) \cdot s} \ar[u]^{\delta}& C^1(X,\K)\cdot s \ar[r]^{(-\cup \eta) \cdot s} \ar[u]^{\delta}& C^2(X,\K)\cdot s^2 \ar[u]^{\delta}& \cdots\\
 & C^0(X,\K)\cdot s \ar[r]^{(-\cup \eta) \cdot s} \ar[u]^{\delta}  &  C^1(X,\K)\cdot s^2  \ar[u]^{\delta}  & \cdots\\
  & & C^0(X,\K)\cdot s^2 \ar[u]^{\delta} & \cdots \\
  & & & \ddots
},
\]
while the the $0$-page of the spectral sequence induced by the $J$-adic filtration is given by 
\[
\xymatrixrowsep{8pt}
\xymatrix{
 \ddots &\vdots &  \vdots &   \vdots &  \vdots   \\
 &  C_0(X,\K)\cdot s^3 &  C_1(X,\K)\cdot s^2 \ar_{(-\cap \eta)\cdot s}[l] \ar[d]^{\partial} & C_2(X,\K)\cdot s 
\ar_{(-\cap \eta)\cdot s}[l] \ar[d]^{\partial} & C_3(X,\K) \ar_{(-\cap \eta)\cdot s}[l] \ar[d]^{\partial}  \\
 &  &   C_0(X,\K)\cdot s^2 &  C_1(X,\K)\cdot s \ar_{(-\cap \eta)\cdot s}[l] \ar[d]^{\partial} & C_2(X,\K) 
\ar_{(-\cap \eta)\cdot s}[l] \ar[d]^{\partial} \\
 &  & & C_0(X,\K)\cdot s &  C_1(X,\K) \ar_{(-\cap \eta)\cdot s}[l] \ar[d]^{\partial}\\
 & & & &  C_0(X,\K) 
}.
\] 
 \end{rmk}

\smallskip

For the differential maps on the pages $E_k$ with $k\geq 2$, we have to deal with a special kind of $(k+1)$-tuple Massey products, determined by $\eta$. See the survey paper \cite{LM21} for an overview of higher order Massey products.  
Since later on we will use Cirici-Horel's results \cite{CH22} about higher order Massey products, we follow here the terminology from their paper.
\begin{defn}
Let $\omega\in H^i(X,\K)$. 
A defining system for $\{\overbrace{[\eta],\cdots,[\eta]}^{k},\omega \}$ is  a collection of elements of $\{\alpha_1,\cdots,\alpha_k\}$ in $ C^i(X,\K)$ such that for the following $(k+1)\times(k+1)$-matrix
\[
  C=(c_{i,j})= \begin{pmatrix}
    \eta & 0 & \cdots & 0 & 0\\
   0 &  \eta & \cdots & 0 & \alpha_{k}\\
    \vdots & \vdots & \ddots & \vdots& \vdots\\
    0 & 0 & \cdots \ & \eta & \alpha_2\\
    0 & 0 & \cdots & 0 & \alpha_1\\
  \end{pmatrix}  
\]
we have $[\alpha_1]=\omega$
and
$$ \delta(c_{i,j})=\sum_{q=i}^{j-1} (-1)^{\deg c_{i,q}}c_{i,q}c_{q+1,j}.$$ Here $[\alpha_1]$ denotes the cohomology class of the cocycle $\alpha_1$. 
The {\it $(k+1)$-tuple Massey product} for $\{ \overbrace{[\eta],\cdots,[\eta]}^{k},\omega  \}$ is defined to be the set of cohomology classes of $$\sum_{q=1}^k  (-1)^{\deg c_{1,q}}c_{1,q}c_{q+1,k+1}.$$\end{defn}

In our case, the above equations are equivalent (up to signs) to the following
\begin{center}
    $\delta \alpha_1=0, [\alpha_1]=\omega, \delta  \alpha_2=\eta \cup \alpha_1, \cdots, \delta  \alpha_{k}=\eta \cup \alpha_{k-1},$
\end{center} which  in \cite{KP,Paj17,Paj19} is called a $k$-chain starting from $\omega$, 
and the $(k+1)$-tuple Massey product is  $\eta \cup \alpha_k $. 
There is some indeterminacy of this Massey product arising from the choice of $\{\alpha_1,\cdots,\alpha_k\}$ in $ C^i(X,\K)$.
In fact, the indeterminacy  of the $(k+1)$-tuple Massey product of $\eta$ and $\omega$ comes from the corresponding $k$-tuple Massey product of $\eta$ and some $\omega'\in H^{i}(X,\K)$.

\begin{defn}\label{higherM}  Let $\omega\in H^i(X,\K)$ and $k\geq 1$. With the above notations, the image of $\eta \cup \alpha_k$  modulo the indeterminacy is called 
    the {\it $(k+1)$-tuple Massey product associated to $\eta$}, denoted by 
    $$\langle [\eta], \omega\rangle_k=\langle \overbrace{ [\eta],\cdots,[\eta]}^k, \omega \rangle .$$
    If $\langle [\eta], \omega\rangle_k\neq 0$, we say the length of the higher order Massey product {\it on degree $i$ } associated to $\eta$ is  $k+1$.  Note that for $k=2$, this definition yields a special type of classical triple Massey products $ \langle[\eta], [\eta],\omega\rangle \in H^{i+1}(X,\K)/(\eta \cup H^{i}(X,\K))$.
\end{defn} 
\begin{proof}[Proof of Theorem \ref{thm main}]
    By standard computations with spectral sequences (see, e.g., \cite[pages 163--165]{BT}), one gets that
the differential maps $d_k$ for the above-mentioned spectral sequence are computed  exactly by the $(k+1)$-tuple Massey products associated to $\eta$.  
Since $\widehat{R}= \K[[s]]$ is a principal ideal domain, any bounded complex of  finitely generated $\widehat{R}$-modules is quasi-isomorphic to a finite direct sum of complexes of type $0\to \widehat{R}\to 0$ or $0\to \widehat{R}\overset{\cdot s^j}{\to} \widehat{R}  \to 0$ for some $j\geq 0$ at various degrees. It is easy to see that the spectral sequence for the second type degenerates on the $(j+1)$-th page, while the module $ \widehat{R}/(s^j)$ has the Jordan block size equal to $j$. 
So the maximal size of Jordan blocks for the eigenvalue $1$ part of $H_i(X^\nu,\K)$ is one less than the number $\min_k \{d_k^{p,q}=0 \text{ for all } p+q=i\}$, which by definition is exactly the  length of the highest nonvanishing  Massey product  on degree $i$ associated to  $\eta$.  
Then the assertion of Theorem \ref{thm main} follows. (One may also check \cite[Proposition 9.1]{PS10} for more details.)
\end{proof}

\begin{proof}[Proof of Corollary \ref{cor singular}]
For complex coefficients, one can use Sullivan's de Rham cdga to replace $C^*(X,\C)$. Note that for {a} complex algebraic variety,  Sullivan's de Rham cdga is equivalent to a cdga admitting an extra grading inducing the weight filtration on $H^*(X,\C)$, see \cite[Theorem 4.2]{BR} and \cite[Theorem 8.11, Remark 8.12]{CH20}. In particular, the differential map  preserves the weight filtration, see \cite[Theorem 4.2(ii)]{BR}. 
Note that the weight filtration on $H^i(X,\C)$ has range $[0,\min\{2i,2n\}]$. For any $\alpha \in H^i(X,\C)$,  
   the assumption $W_0 H^1(X,\C)=0$ implies that $d_k(\alpha)$ increase the weight at least by $k$. 
    Hence $d_k(\alpha)=0$ if $k> \min\{2i+2,2n\}$. 
Same proof applies to the case $W_1 H^1(X,\C)=0$.
\end{proof}

\section{Prime-power cyclic covers. Homology of local systems. Betti bounds}\label{sec4}
In this section, we assume that $X$ is a connected finite CW complex with a group epimorphism $\nu \colon \pi_1(X)\twoheadrightarrow \Z$. If $p$ is a prime integer and $r>0$, by further projecting to $\Z_{p^r}$ we get a corresponding  $p^r$-fold cover $X_r$ of $X$. 
Let $\K=\F_p$ and $\eta_p \in C^1(X,\F_p)$ be the corresponding $1$-cocycle (where the subscript is used to emphasize the characteristic of the field).
Note that  $$\F_p \Z_{p^r}=\F_p[t]/(t^{p^r}-1)=\F_p[t]/((t-1)^{p^r}).$$ Then by the $J$-adic filtration, one gets a truncated spectral sequence induced from the one in the previous section, which computes the homology $H_*(X_r,\F_p)$. 

By Proposition \ref{prop key},  this spectral sequence has $p^r$ columns, hence it always degenerates at most at the $E_{p^r}$-page. Moreover,  the differential map on the $E_1$-page of the spectral sequence 
is given by the left cup product with $\eta_p$, 
and 
 the differential map $d_k$ on the $E_k$-page for $k\geq 2$ is computed  exactly by the $(k+1)$-tuple Massey products associated to $\eta_p$, since this spectral sequence is a truncated version of the one used in the previous section. Then Proposition \ref{cor2i} follows directly. 
Let us next prove Proposition \ref{prop prime tower cover} and Theorem \ref{thm inequality}.
\begin{proof}[Proof of Proposition \ref{prop prime tower cover}]
By the $E_2$-page of the spectral sequence, we have
\begin{align*}
          b_i(X_r,\F_p) \leq & \sum_{k=i-p^r+1}^i \dim E_2^{i-k,k} \\
                   =   &  \dim \ker(d_2^{0,i})+\dim \coker(d_2^{p^r-1,i-p^r+1})+ (p^r-2)\cdot   \beta_i(X,\eta_p)\\
                   =   & b_i(X,\F_p) + (p^r-1)\cdot   \beta_i(X,\eta_p)
\end{align*}
Here the second equality follows from the fact $$\ker(d_2^{0,i})=\ker \{ H^i(X,\F_p) \xrightarrow{\eta \cup} H^{i+1}(X,\F_p)\} $$
and $$ \coker(d_2^{p^j-1,i-p^j+1})=\coker \{ H^{i-1}(X,\F_p) \xrightarrow{\eta \cup} H^{i}(X,\F_p)\}.$$
\end{proof}

 \begin{proof}[Proof of Theorem \ref{thm inequality}] Let $Y$ denote the $p$-fold cover of $X$ associated to $\nu\colon \pi_1(X)\to \Z_p$.
Since $p$ is a prime number, we have
 $$b_i(Y,\C)= b_i(X,\C)+(p-1)b_i(X,L_\lambda). $$
On the other hand, we have by the universal coefficient theorem and \eqref{iinequality} that
$$ b_i(Y,\C)                  \leq  b_i(Y,\F_p) \leq  b_i(X,\F_p)+   (p-1)\cdot \beta_i(X,\eta_p). $$ 
Then the  inequality \eqref{iineq} follows. The  inequality \eqref{iinequality PS} is obvious. 

If a $k$-tuple Massey product on degree $j$ associated to $\eta_p$ is non-trivial for some  $k\leq p$, then we have by Proposition \ref{cor2i} that $$b_j(Y,\F_p) <  b_j(X,\F_p)+   (p-1)\cdot \beta_j(X,\eta_p),$$ so the claim follows.  \end{proof}

\section{Hyperplane arrangements}\label{sec5}
In this section, we specialize to the case of hyperplane arrangement complements. For relevant background on hyperplane arrangements, we refer to Dimca's book \cite{Dim17}. 

Let $\sA$ be an arrangement of $d$ hyperplanes in $\C^n$ with complement $X$.
Note that $H_1(X,\Z)\cong \Z^d$ is torsion free. Then any epimorphism $\pi_1(X)\twoheadrightarrow \Z_{p^r}$ must factor through some group epimorphism $\nu \colon \pi_1(X)\twoheadrightarrow \Z$. Hence Proposition \ref{prop prime tower cover}, Proposition  \ref{cor2i} and Theorem \ref{thm inequality} apply to the case of hyperplane arrangement complements.

It is well-known that the complement of a hyperplane arrangement is formal over $\C$ in the sense of Sullivan’s rational
homotopy theory, see, e.g., \cite{Dup} for a discussion and references, but in general it is not formal over $\F_p$. 
We recall here the following example given by Matei \cite{Mat}.

\begin{ex} \label{example Matei}
Let $p>2$ be a prime integer. Consider the full monomial arrangement $\sA(p,1,3)$ in $\C^3$ defined by the zero locus of
$$ z_1\cdot z_2 \cdot z_3 \cdot\prod_{1\leq i<j\leq 3} (z_i^p-z_j^p).$$
We label the hyperplanes as follows
\begin{center}
    $H_{i,j}^q=\{ z_i-\lambda^q z_j\}$, where $\lambda=\exp(2\pi i/p)$ and $1\leq i<j\leq 3$ and $0\leq q \leq p-1$,
\end{center}
\begin{center}
   $H_{3p+i}=\{z_i=0\} $ for $1\leq i \leq 3$. 
\end{center}
Matei showed \cite{Mat} that the complement $X$ of  $\sA(p,1,3)$ has non-trivial  triple Massey products on $H^2(X,\F_p)$. More precisely,  for
    \begin{center}
        $\nu=(\overbrace{1, \cdots, 1}^{p}, \overbrace{-1, \cdots, -1}^{p},\overbrace{0, \cdots, 0}^{p},0,0,0)$ and $\omega=(\overbrace{0, \cdots, 0}^{p},\overbrace{1, \cdots, 1}^{p}, \overbrace{-1, \cdots, -1}^{p},0,0,0)$
    \end{center}
  he showed that $\langle\nu,\nu,\omega\rangle\neq 0$.   

Now we focus on the arrangement  $\sA(3,1,3)$, i.e., $p=3$. 
  Let $L_\lambda$ be rank one $\C$-local system on $X$ corresponding to $$(\lambda, \lambda, \lambda, \lambda^{-1}, \lambda^{-1}, \lambda^{-1},\overbrace{1, \cdots, 1}^{6})\in (\C^*)^{6} $$
  with $\lambda=\exp(2\pi i/3)$.
A presentation for the fundamental group of $X$ can be found in \cite[Section 3.2]{Mat}.
 Then a computation by Fox calculus (using a computer) yields that
  $$1=b_1(X,L_\lambda)< \beta_1(X,\eta_3)=2. $$
  This is compatible with the strict inequality (\ref{iinequality sharper pS}).
\end{ex}

On the other hand, the following result due to Cirici--Horel (stated here in the context of hyperplane arrangements) provides many examples of hyperplane arrangement complements having trivial higher order Massey products over $\F_p$.

\begin{theorem}\cite[Theorem 7.12]{CH22,CH24}
    Let $X$ be the complement of a hyperplane arrangement $\sA$ in $\C^n$. 
    Assume that $\sA$ is defined over $K$, where $K$ is a $\ell$-adic field (i.e., a finite extension of $\Q_\ell$) and let $q =\ell^m$ be the cardinality of its residue field. Here $\ell$ is a prime number different from $p$. Write $h$ for the order of $q$ in the group $\F_p^*$. If $(k-2)/h\notin \Z$, then all $k$-tuple Massey products are trivial in $H^*(X,\F_p)$. \end{theorem}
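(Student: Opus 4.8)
The plan is to exploit the arithmetic Frobenius action carried by the $\F_p$-cohomology of $X$, which arises precisely because $\sA$ is defined over the $\ell$-adic field $K$, and to track how this action constrains Massey products. First I would pass to the \'etale realization. Since $\sA$, and hence $X$, is defined over $K$ with residue field $\F_q$ (where $q=\ell^m$), the geometric Frobenius $\mathrm{Frob}_q$ acts on $H^*_{\mathrm{et}}(X_{\overline{K}}, \F_p)$, which by the comparison theorem agrees with the singular cohomology $H^*(X,\F_p)$ used throughout the paper. The Orlik--Solomon presentation shows that $H^*(X,\F_p)$ is generated in degree one by the logarithmic classes attached to the hyperplanes, each of which is a Tate class on which $\mathrm{Frob}_q$ acts by multiplication by $q$; consequently $H^i(X,\F_p)$ is pure, with $\mathrm{Frob}_q$ acting as the scalar $q^i$.

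The decisive input is that this Frobenius action lifts to the cochain level compatibly with the full higher multiplicative structure. This is what the Cirici--Horel machinery supplies: an $E_\infty$-model of $C^*(X,\F_p)$ equipped with a Frobenius-equivariant weight decomposition in which $H^i$ is concentrated in the single weight class corresponding to the eigenvalue $q^i$. Because Massey products are secondary operations determined by this multiplicative structure, they are Frobenius-equivariant. Concretely, for a $k$-tuple Massey product $\langle a_1,\dots,a_k\rangle$ with $a_j\in H^{d_j}(X,\F_p)$, one may choose a defining system in which every cochain bounding the product $a_i\cdots a_j$ is weight-homogeneous of weight $q^{d_i+\cdots+d_j}$ (the differential preserves weights, so a weight-homogeneous coboundary bounds within its own weight); the resulting class is then a Frobenius eigenvector, as a coset modulo indeterminacy, with eigenvalue $q^{d_1+\cdots+d_k}$.

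Next I would compare this weight with the cohomological degree in which the product lives. A $k$-tuple Massey product of classes of degrees $d_1,\dots,d_k$ lands in $H^{(d_1+\cdots+d_k)-(k-2)}(X,\F_p)$, and by purity $\mathrm{Frob}_q$ acts on the whole of this group, and on its indeterminacy subspace, as the scalar $q^{(d_1+\cdots+d_k)-(k-2)}$. Equivariance therefore forces the scalar $\big(q^{d_1+\cdots+d_k}-q^{(d_1+\cdots+d_k)-(k-2)}\big)$ to annihilate the product modulo indeterminacy. When $q^{k-2}\neq 1$ in $\F_p^*$ this scalar is invertible, so the product lies in its own indeterminacy, i.e.\ vanishes. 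Since the order of $q$ in $\F_p^*$ is $h$, the condition $q^{k-2}=1$ is equivalent to $h\mid (k-2)$; taking the contrapositive yields the claim: if $(k-2)/h\notin\Z$, then every $k$-tuple Massey product in $H^*(X,\F_p)$ is trivial.

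The main obstacle is the second step: rigorously establishing that the Frobenius action descends to a weight decomposition of an $E_\infty$-model that is genuinely compatible with the secondary operations. This purity-and-formality statement is the technical heart of the Cirici--Horel approach, and it is where the hypotheses $\ell\neq p$ and the structure of the residue field genuinely enter; once weight-equivariance of Massey products is granted, the numerical conclusion is mere bookkeeping. A subtlety worth flagging is that over $\F_p$ the eigenvalue $q^i$ depends only on $i$ modulo $h$, which is exactly why the obstruction is governed by the divisibility condition $h\mid(k-2)$ rather than by the naive equality $k=2$ that one would expect from a characteristic-zero purity argument.
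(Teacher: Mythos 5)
The paper does not prove this statement at all: it is imported verbatim, with citation, from Cirici--Horel \cite{CH22,CH24}, so there is no internal proof to compare your attempt against; the comparison can only be with the cited source. Your reconstruction follows essentially the same strategy as that source: \'etale--singular comparison, $q$-purity of $H^i(X,\F_p)$ (via Orlik--Solomon generation in degree one by Tate classes, using $\ell\neq p$ so that $\F_p(-1)$ is unramified and Frobenius acts by $q$), a Frobenius-equivariant multiplicative weight structure at the cochain/$E_\infty$ level, and the eigenvalue bookkeeping showing that a $k$-fold product of total weight $q^{d_1+\cdots+d_k}$ landing in the pure group $H^{(d_1+\cdots+d_k)-(k-2)}$ must vanish unless $q^{k-2}=1$ in $\F_p^*$, i.e.\ $h\mid (k-2)$. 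Two caveats. First, your middle step --- that the Frobenius action can be realized on a model compatibly with all higher products, so that defining systems can be chosen weight-homogeneously --- is precisely the hard content of Cirici--Horel's purity-implies-formality machinery; invoking it means your argument is an accurate outline of their proof rather than an independent one (which is unavoidable here, and you flag it honestly as the main obstacle). Second, your assertion that Frobenius acts on each hyperplane class by the scalar $q$ requires each hyperplane, not merely their union, to be defined over $K$ (equivalently, that Frobenius does not permute the hyperplanes); otherwise Frobenius acts on $H^1$ by $q$ times a permutation matrix and scalar purity can fail --- already two points of $\C$ conjugate over $K$ give eigenvalues $\pm q$. That is the intended reading of ``defined over $K$'' in the statement, but it should be made explicit, since the purity claim is false under the weaker reading.
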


\br Cirici-Horel use the singular cochain algebra in \cite{CH22}, while we work with the simplicial cochain algebra.   Since every finite CW complex is homotopy equivalent to a finite simplicial complex, \cite[Theorem 7.12]{CH22,CH24} remains valid in our setting. 
\er 

\bex \label{ex graphic}
Let $\sA$ be a graphic arrangement. For any prime number $p>2$, we can choose $q=\ell=2 $. 
In particular, if $p=3$, we have $h=2$; if $p=5$, we have  $h=4$.
Hence the inequality (\ref{iinequality}) becomes an equality
$$b_i(Y,\F_p)=b_i(X,\F_p) + (p-1)\cdot \beta_i(X,\eta_p),$$
which is determined by the combinatorics with $Y$ some $p$-fold cover of $X$ for $p=3$ or $5$.
\eex

The above examples raise the question of combinatorial invariance of the higher order Massey products of hyperplane arrangement complements over $\F_p$. This problem seems to have been addressed to some extent in an unpublished work of Rybnikov \cite{Ry}.

\medskip

We end this paper with the following remark.
\begin{rmk} 
Let $X$  be a hyperplane arrangement complement. Using (\ref{mod 2 equation}), 
Yoshinaga showed that the Betti number $b_i(Y,\F_2)$ for the double cover $Y$ is determined by combinatorics  \cite{Yos}.
He also asked the following question \cite[Remark 3.8]{Yos}: 
\begin{center}
    Is the cohomology ring structure of $H^*(Y,\F_2)$ combinatorially determined?
\end{center}

Let $Y'$ be a double cover of $Y$, hence a $4$-fold cover of $X$. By (\ref{mod 2 equation}), $b_i(Y',\F_2)$ is determined by cohomology ring structure of $H^*(Y,\F_2)$. On the other hand, Proposition \ref{prop prime tower cover} shows that $b_i(Y',\F_2)$ can be computed using the information from the higher order Massey products. 
This suggests that there is a  subtle connection between the cohomology ring structure of $H^*(Y,\F_2)$ and the higher order Massey products. 
\end{rmk}

\bibliographystyle{amsalpha}

\end{document}